\documentclass{amsart}
\usepackage[left=1.21in,right=1.21in]{geometry}
\newtheorem{theorem}{Theorem}[section]
\newtheorem{lemma}[theorem]{Lemma}
\newtheorem{proposition}[theorem]{Proposition}
\theoremstyle{definition}

\newtheorem{typecase}[theorem]{Case}
\theoremstyle{remark}
\newtheorem{remark}[theorem]{Remark}
\numberwithin{equation}{section}
\usepackage{booktabs}
\usepackage{array}
\newcommand{\seq}[1]{\par\smallskip\noindent{\raggedright #1\par}\smallskip}
\usepackage{url}
\usepackage{hyperref}
\hypersetup{%
  colorlinks=true,
  linkcolor=blue,
  citecolor=blue,
  urlcolor=blue,
  pdftitle={On Permutations Whose Adjacent-Sum Reciprocals Sum to One},
}
\begin{document}
\title{On Permutations Whose Adjacent-Sum Reciprocals Sum to One}
\author{Jinggao Ai}
\address{School of Mathematics and Statistics, Wuhan University, Wuhan 430072, China}
\email{Jules142857@gmail.com}
\author{Dingsheng Hu}
\address{School of Mathematics and Statistics, Wuhan University, Wuhan 430072, China}
\email{hudingsheng618@gmail.com}
\author{Junmin Yan}
\address{School of Mathematics and Statistics, Wuhan University, Wuhan 430072, China}
\email{glorysensitives@gmail.com}
\begin{abstract}
In this paper, we give a constructive proof that for any integer $n>7$ (resp. $n>6$), the numbers $1,2,\ldots,n$ can be arranged in a circle (resp. in a line) such that the sum of the reciprocals of all adjacent sums equals $1$. The above conjecture was originally posed by Zhi-Wei Sun.
Partitioning the integers according to $n\bmod 12$ into nine residue classes, we prove a recursive proposition and inductively construct new examples. Main lemmas and propositions in introduction are proved in Section~\ref{sec:proof}. The rest of the cases are handled by explicit constructions in appendix.
\end{abstract}
\maketitle
\section{Introduction}
\subsection*{Background}
Permutations are important objects of study in mathematics (see, e.g.,
\cite{Sedgewick1977}). Let $\mathfrak{S}_n$ be the symmetric group of permutations of $\{1,2,\ldots,n\}$. Let $\tau\in \mathfrak{S}_n$ and we may represent it by the sequence
$(\tau(1),\ldots,\tau(n))$. In \cite{Sun2018}, Z.-W. Sun posed several
conjectures about permutations. In this paper we resolve one of them [Conj. 4.7(i)]
(namely Theorem~\ref{thm:main}).
For related work on decompositions into distinct parts with a prescribed sum of reciprocals, see~\cite{vanDoorn2025}.
\begin{theorem}\label{thm:main}
{\rm(i)}For any integer $n>6$, there exists a permutation $\tau\in \mathfrak{S}_n$ such that
\begin{equation}\label{eq:linear}
\sum_{i=1}^{n-1}\frac{1}{\tau(i)+\tau(i+1)}=1.
\end{equation}
{\rm(ii)}For any integer $n>7$, there exists a permutation $\tau\in \mathfrak{S}_n$
such that
\begin{equation}\label{eq:cyclic}
\sum_{i=1}^{n-1}\frac{1}{\tau(i)+\tau(i+1)}+ \frac{1}{\tau(n)+\tau(1)}=1.
\end{equation}
\end{theorem}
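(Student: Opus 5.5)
We would prove both parts by induction on $n$ in steps of $12$. The induction is driven by a recursive \emph{extension} step that turns a valid arrangement of $\{1,\dots,n\}$ into one of $\{1,\dots,n+12\}$. A finite list of explicit base arrangements, found by computer search, covers all residue classes of $n \bmod 12$ for small $n$.

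The key idea is that the new block $n+1,\dots,n+12$ cannot simply be appended, since that adds a positive amount to a sum that already equals $1$. Instead we cut one or a few existing adjacencies $a\,b$ and splice the new numbers in, so that the reciprocals lost from the cut pairs exactly equal the reciprocals gained. The natural requirement on the old configuration is that it contain a prescribed \emph{marker} pattern: a specific adjacent pair, or short window, whose values are tied to $n$ in a fixed way, for instance $a+b=n+c$ for a small constant $c$.

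We would then formulate a stronger inductive statement.
\begin{quote}
For each admissible residue class of $n \bmod 12$, there is a valid arrangement of $1,\dots,n$ that contains a marker window $W_n$ of a specified shape.
\end{quote}
The extension must then map a configuration with $W_n$ to one with $W_{n+12}$, so that the induction propagates. Concretely, we would try to insert the new numbers $m=n+1,\dots,n+12$ between old entries. We pair them so that the inserted adjacent sums take values like $2n+d_i$, together with some sums of the form $x+m$ where $x$ is small and taken from the marker. We then require
\[
\sum_i \frac{1}{s_i^{\mathrm{new}}}=\sum_j \frac{1}{s_j^{\mathrm{old}}}
\]
to hold identically in $n$ as a rational-function identity. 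This is plausible because we can cut pairs whose sums also grow linearly in $n$.

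The cyclic case (ii) is handled the same way, because splicing is a local operation. Any valid cycle yields a valid line only under extra conditions, so we would run parallel base cases rather than deduce one part from the other.

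The main steps, in order, are as follows.
\begin{enumerate}
\item Guess the form of the splice, i.e.\ which old sums to destroy and which linear-in-$n$ sums to create, so that the reciprocal identity holds for every $n$ in a residue class. This forces a split by $n \bmod 12$. We expect nine classes to admit a uniform gadget, with the remaining classes needing separate handling.
\item Check that the spliced sequence is still a permutation and still contains the marker shifted by $12$.
\item Supply explicit base examples for each class, starting at the smallest $n$ where the gadget applies.
\item List, in an appendix, explicit constructions for the sporadic $n$ or classes not covered by the gadget.
\end{enumerate}

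The main obstacle is step 1: finding an exact identity of the form $\sum 1/(\text{linear in } n)=\sum 1/(\text{linear in } n)$ among distinct sums realizable by distinct integers. A generic linear-in-$n$ identity is impossible. It typically requires matched denominators, for example arranging that $1/(2n+a)$ terms telescope against terms of the form $1/(n+b)$ via $2/(2n+2b)$, that is, a doubled pair. We would first try to make the new sums coincide pairwise with the cut sums, up to such doublings, and then search by computer for small gadgets satisfying these constraints, deriving the general $n$ pattern from them.
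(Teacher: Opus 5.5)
There is a genuine gap, and it lies in the basic design, not in the search for a gadget. No extension step that keeps every old entry at its old value and changes only a bounded number $k$ of adjacencies can be iterated indefinitely.

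For a cycle on $\{1,\dots,N\}$ with denominators $d_1,\dots,d_N$ we have $\sum_i d_i=N(N+1)$, and therefore
\[
\sum_{i=1}^{N}\frac{\bigl(d_i-(N+1)\bigr)^2}{d_i}=(N+1)^2\,C(\tau)-N(N+1).
\]
So $C(\tau)=1$ forces the left side to equal $N+1$. A valid arrangement sits just above the AM--HM minimum $N/(N+1)$. At most six of its adjacent sums can be $\le\frac23(N+1)$, and since $d_i<2N$, we get $\sum_i(d_i-(N+1))^2<2N(N+1)$.

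Now start from a valid cycle of length $N$ and apply your step $t=\lceil\sqrt N\rceil$ times. At least $N-kt$ old edges survive with unchanged denominators, while the centre has moved from $N+1$ to $N'+1=N+1+12t$. The triangle inequality in $\ell^2$ over the surviving edges gives
\[
\Bigl(\sum_{\mathrm{surv}}\bigl(d_i-(N'+1)\bigr)^2\Bigr)^{1/2}\ \ge\ 12t\sqrt{N-kt}-\sqrt{2N(N+1)}\ \approx\ (12-\sqrt2)\,N .
\]
Validity at length $N'$ requires this to be less than $\sqrt{2N'(N'+1)}\approx\sqrt2\,N$. Hence, for large $N$, your step 1 has no solution, whatever the marker window, the choice of cut pairs, or the rational-function identity. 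The local invariant ``reciprocals lost $=$ reciprocals gained'' is the wrong thing to search for. For lines the analogous identity bounds the same weighted sum by $3(N+1)$, giving the same contradiction.

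The missing idea is that essentially every adjacent sum must grow along with $n$. So the extension must relabel the old entries, not merely splice new ones in. The paper does this with the embedding $E(x)=x+T\rho(x)$:
\begin{itemize}
\item entries $\le p$ stay fixed;
\item entries in $(p,A-p)$ move up by $T$;
\item entries $\ge A-p$ move up by $2T$.
\end{itemize}
Hypothesis (H2) guarantees that every non-expansion $\alpha$-edge has region indices summing to $2$. Its denominator $n+c$ therefore becomes $n'+c$ with $n'=n+2T$. Hypothesis (H3) guarantees that a $\beta$-edge with denominator $qD$ has index sum $2q$, so it becomes $q(D+2T)$.

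The $2T$ new entries fill the two vacated intervals $[p+1,p+T]$ and $[A+T-p,A+2T-p-1]$, which are mirror images under $z\mapsto A'-z$. They are threaded into the expansion edges $\{A-x,f(x)\}$, $x\in J$, as alternating paths $z_1,A'-z_1,z_2,A'-z_2,\dots$. These paths create only the denominators $A'$ and $A'+s_k$, and Lemma~\ref{lem:wx} ensures the $z_j$ exhaust $[p+1,p+T]$.

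The reciprocal-sum identity is then global rather than local. The entire denominator multiset follows the $n$-parametrized pattern of Table~\ref{tab:denoms}, whose reciprocal sum is identically $1$. The induction invariant is (H1)--(H4), not a bounded marker window.

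Finally, you do not need parallel base cases for lines. The paper carries two splicing edges with $a+b=c+d=2(b+d)$ through the induction as (H4). It then converts each cycle into a line by Proposition~\ref{lem:splice}, treating only $n=7$ separately.
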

\medskip
Among the many permutation conjectures of Z.-W. Sun~\cite{Sun2018,Sun2019add,Sun2021book},
G.-N. Han~\cite[Theorem~1]{Han2020} proved three: for all sufficiently large
$n$, there exist permutations $\tau,\pi,\sigma\in\mathfrak S_n$ such that
\[
\sum_{k=1}^{n-1}\frac{1}{\tau(k)-\tau(k+1)}=0,\qquad
\sum_{k=1}^{n-1}\frac{1}{\pi(k)-\pi(k+1)}+\frac{1}{\pi(n)-\pi(1)}=0,
\qquad
\sum_{k=1}^{n-1}\frac{1}{\sigma(k)\sigma(k+1)}=1.
\]
\medskip
Sun also conjectured (see ~\cite{Sunsun}) that for any integer $n>5$ there is a permutation $\tau\in \mathfrak S_n$ satisfying
\[
\sum_{k=1}^n \frac{1}{k+\tau(k)}=1,
\]
and this was later confirmed by the user Zhao Shen on MathOverflow via a
clever induction argument.\par
\medskip
A classic conjecture regarding adjacent sums of permutations was posed by
Filz(~\cite{Filz1982}, pp160-162), and later recorded in Guy's book~\cite{Guy2004}, which states that there exists  $\tau\in\mathfrak S_n$ for each positive even number $n$ such that the $n$
adjacent sums
\[
\tau(1)+\tau(2),\ \tau(2)+\tau(3),\ \ldots,\ \tau(n-1)+\tau(n),\ \tau(n)+\tau(1)
\]
are all prime. This conjecture remains open (see~\cite{ChenFuGuo2021,ShangLiZhang2025,She2026}).
In fact, Y.-F. She~\cite{She2026} proved that the assertion holds for all
sufficiently large even $n$.\par
\medskip
\subsection*{Sketch of the proof}
We now sketch the proof of sufficiently large \(n\). For convenience, we denote the left-hand side of \eqref{eq:linear} by $L(\tau)$, and the left-hand
side of \eqref{eq:cyclic} by $C(\tau)$. When computing $L(\tau)$, we call $\tau$ a
\emph{linear arrangement} of length $n$; when computing $C(\tau)$, we call $\tau$ a
\emph{cyclic arrangement} of length $n$, in which case we adopt the convention
$\tau(n+1)=\tau(1)$.\par
For a given \(n\), a cyclic arrangement
\(\tau_0\in\mathfrak{S}_n\) satisfying \eqref{eq:cyclic} and some conditions (see Proposition~\ref{lem:splice}) results in a linear
arrangement \(\tau\in\mathfrak{S}_n\) satisfying \eqref{eq:linear}. Moreover, Theorem~\ref{thm:main}(i) in the case
\(n=7\) constructed in Appendix~\ref{app:exceptions}, and
we thus focus on proving Theorem~\ref{thm:main}(ii). \par
The goal of our inductive construction starts from a cyclic arrangement of length $n$ satisfying some conditions (see Proposition~\ref{lem:recip} and ~\ref{lem:main-recurse}), and inserts $2T$ new elements into it to obtain a cyclic arrangement of length $n+2T$ still satisfying the same conditions; the step can then be repeated indefinitely (see
Proposition~\ref{lem:main-recurse}).\par
\medskip
For finite sequences $X=(x_1,\ldots,x_k)$ and $Y=(y_1,\ldots,y_m)$, write $X\circ Y=(x_1,\ldots,x_k,y_1,\ldots,y_m)$ (or briefly $(X,Y):=X\circ Y$), and $X^{\mathrm{rev}}=(x_k,\ldots,x_1)$ for the reversal of $X$.\par
We also use the following terminology. Given a cyclic arrangement $\tau\in\mathfrak S_n$, each unordered pair $\{\tau(k),\tau(k+1)\}$ of adjacent elements is called an \emph{undirected edge}. When the direction $\tau(1)\to\tau(2)\to\cdots\to\tau(n)\to\tau(1)$ is fixed, the ordered pair $(\tau(k),\tau(k+1))$ is called a \emph{directed edge}.
\begin{proposition}\label{lem:splice}
Let
\[
\tau_c=(a,b)\circ X\circ(c,d)\circ Y\in \mathfrak{S}_n
\]
be a cyclic arrangement of length $n$, where $X,Y$ are subsequences (possibly empty),
and $a,b,c,d\in\{1,2,\ldots,n\}$ satisfy
\[
a+b=c+d=D,\qquad b+d=\frac{D}{2} .
\]
Then the linear arrangement of length $n$
\[
\tau_\ell=(a)\circ Y^{\mathrm{rev}}\circ(d,b)\circ X\circ(c)
\]
satisfies $L(\tau_\ell)=C(\tau_c)$. In this case, $(a,b)$
and $(c,d)$ are called \emph{splicing edges}.
\end{proposition}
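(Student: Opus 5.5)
We compare the multisets of adjacent sums of the two arrangements directly. Write $X=(x_1,\ldots,x_k)$ and $Y=(y_1,\ldots,y_m)$. In the cyclic arrangement $\tau_c$ there are $n$ undirected edges. They fall into three groups: the internal edges of $X$ and of $Y$; the four "connector'' edges $\{b,x_1\}$, $\{x_k,c\}$, $\{d,y_1\}$, $\{y_m,a\}$; and the two splicing edges $\{a,b\}$, $\{c,d\}$. When $X$ or $Y$ is empty the connectors degenerate, for instance $\{b,c\}$ or $\{d,a\}$; we treat these cases at the end.

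In the linear arrangement $\tau_\ell=(a)\circ Y^{\mathrm{rev}}\circ(d,b)\circ X\circ(c)$ there are $n-1$ edges. Reversing $Y$ does not change its internal undirected edges, so those are preserved, and so are the internal edges of $X$. The connectors are also preserved. The step $a\to y_m$ gives $\{y_m,a\}$. The step $y_1\to d$ gives $\{d,y_1\}$. The step $b\to x_1$ gives $\{b,x_1\}$. The step $x_k\to c$ gives $\{x_k,c\}$.

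The only remaining edge of $\tau_\ell$ is $\{d,b\}$. So the two splicing edges $\{a,b\},\{c,d\}$ of $\tau_c$ are replaced by the single edge $\{b,d\}$. The hypotheses $a+b=c+d=D$ and $b+d=D/2$ give
\[
\frac{1}{a+b}+\frac{1}{c+d}=\frac{2}{D}=\frac{1}{b+d}.
\]
Summing reciprocals of edge sums over both arrangements therefore yields $L(\tau_\ell)=C(\tau_c)$.

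Next we check that $\tau_\ell$ is a permutation. Its entries are $a$, $Y$, $d$, $b$, $X$, $c$, which are exactly the entries of $\tau_c$, each appearing once. Here $a,b,c,d$ are distinct as positions of the cyclic sequence, so no entry is repeated.

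Finally, the degenerate cases. If $X$ is empty, the edge $\{b,c\}$ of $\tau_c$ reappears as the step $b\to c$ in $\tau_\ell$. If $Y$ is empty, the edge $\{d,a\}$ of $\tau_c$ (closing the cycle) reappears as $a\to d$ in $\tau_\ell$. The argument above goes through unchanged in both cases. The main point of care is this bookkeeping of boundary edges, including when $X$ or $Y$ is empty; the rest is immediate.
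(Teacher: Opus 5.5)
Your proof is correct and takes the same approach as the paper: $\tau_\ell$ keeps every edge of $\tau_c$ except the two splicing edges with sum $D$ and gains the single edge $\{b,d\}$ with sum $D/2$, so $\frac{2}{D}=\frac{1}{D/2}$ gives $L(\tau_\ell)=C(\tau_c)$. Your explicit check of the connector edges and of the cases with $X$ or $Y$ empty fills in bookkeeping that the paper leaves implicit.
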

\medskip
We divide the residue classes of the positive integer $n$ into nine
types: types $t=0,2,3$ each denote one class modulo $6$
($n\equiv t\pmod 6$), and types $t=5,11,1,7,4,10$ each denote one class
modulo $12$ ($n\equiv t\pmod{12}$). That is, the modulo-$6$ classes
$5,1,4$ are further subdivided modulo $12$.
\medskip
We next introduce the denominator notation used throughout the construction.
Let $\tau$ be a cyclic arrangement of length $n$. For $i=1,\ldots,n$ write $d_i=\tau(i)+\tau(i+1)$, called a \emph{denominator} of $\tau$, and let $D(\tau)=\{d_i\mid i=1,\ldots,n\}$ be the multiset of all denominators.
For each type, we call the denominators listed in the second
and third columns of Table~\ref{tab:denoms}
\emph{$\alpha$-denominators} and \emph{$\beta$-denominators}, respectively.
Their multiplicities are, respectively,
nonconstant affine functions of $n$ and constants. An edge whose endpoints sum to an $\alpha$-denominator (resp.\
$\beta$-denominator) is called an \emph{$\alpha$-edge} (resp.\
$\beta$-edge).
\begin{center}
\small
\begin{tabular}{c|l|l}
\toprule
Type & $\alpha$-denominator (multiplicity) & $\beta$-denominator (multiplicity) \\
\midrule
0 &
$n-2\,((n-6)/2)$,\ $n\,((n-12)/6)$,\ $n+3\,((n+3)/3)$
&
$3(n-2)/2\,(3)$,\ $n/2\,(1)$ \\
\bottomrule
\end{tabular}
\end{center}
\smallskip
The table above only shows the Type $0$ case. See Table~\ref{tab:denoms} for a complete table.
\smallskip
\begin{remark}
In our construction, every $\beta$-denominator is of the form $qD$, where $D$ is an $\alpha$-denominator and $q\in\{1/2,3/2,2\}$.
\end{remark}
\begin{proposition}\label{lem:recip}
According to the multiplicities in Table~\ref{tab:denoms}, the sum of reciprocals of
all denominators is identically $1$, and the total number of
denominators is exactly $n$.
\end{proposition}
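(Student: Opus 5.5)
The claim is a finite identity check carried out type by type: for each of the nine types $t\in\{0,2,3,5,11,1,7,4,10\}$ we read off the $\alpha$- and $\beta$-denominators and their multiplicities from Table~\ref{tab:denoms}. We then verify two things. First, that the multiplicities add up to $n$. Second, that the weighted sum of reciprocals is the constant function $1$, as a rational function of $n$ on the relevant residue class. Both checks are polynomial identities in $n$. It therefore suffices to verify them symbolically, after clearing denominators, rather than for individual $n$.

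For the count, the multiplicities are affine in $n$ (for the $\alpha$-denominators) or constant (for the $\beta$-denominators). Their sum is an affine function, and we only need to compare its slope and constant term with $n$. For Type~$0$:
\[
\frac{n-6}{2}+\frac{n-12}{6}+\frac{n+3}{3}+3+1 = n .
\]
Here the slopes give $\tfrac12+\tfrac16+\tfrac13=1$, and the constants give $-3-2+1+4=0$. The other types are handled the same way.

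For the reciprocal sum, we group each $\beta$-denominator with the $\alpha$-denominator $D$ of which it is a multiple $qD$ (see the Remark). Each such group then contributes $(m_D+\sum q^{-1}m_{qD})/D$, where $m_D$ and $m_{qD}$ are the multiplicities. For Type~$0$ this gives:
\begin{itemize}
\item the group $D=n-2$ contributes $\bigl(\tfrac{n-6}{2}+\tfrac{2}{3}\cdot 3\bigr)/(n-2)=\tfrac12$;
\item the group $D=n$ contributes $\bigl(\tfrac{n-12}{6}+2\bigr)/n=\tfrac16$;
\item the term $D=n+3$ contributes $\tfrac{n+3}{3}\cdot\tfrac{1}{n+3}=\tfrac13$.
\end{itemize}
The total is $\tfrac12+\tfrac16+\tfrac13=1$.

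The pattern we expect in every type is the same. The $\beta$-terms are exactly what is needed to turn each $\alpha$-multiplicity, which is affine in $n$, into a constant multiple of the corresponding denominator. Each group then contributes a constant $c_D$, and the identity reduces to $\sum c_D=1$. So for each type we will show that $m_D+\sum_q q^{-1}m_{qD}$ is proportional to $D$, and then sum the proportionality constants.

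The main obstacle is purely bookkeeping. The types with $n$ modulo $12$ may have $\alpha$-denominators that are not simply linear in $n$ in this clean way, or a $\beta$-denominator that attaches to two candidate $\alpha$'s. If some group does not collapse to a constant, we fall back on the direct route: write the full sum over a common denominator $\prod D$ and check that numerator minus denominator vanishes identically as a polynomial in $n$. Since the degrees are at most $3$, this amounts to checking at a few values of $n$ in the residue class, which suffices for a polynomial identity. We will also note that all multiplicities are nonnegative integers on the given residue class for $n$ large enough (e.g.\ $n\ge 12$ in Type~$0$). This is not part of the identity itself, but it is needed for the multiset to make sense.
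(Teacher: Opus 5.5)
Your proposal is correct and is essentially the paper's proof: the paper also computes Type~$0$ explicitly, then merges each $\beta$-denominator $qD$ into its $\alpha$-denominator $D$ so that every type collapses to a sum of constants (e.g.\ $\tfrac12+\tfrac16+\tfrac14+\tfrac1{12}$ for Type~$5$), and it leaves the count to direct computation. The grouping collapses cleanly in all nine types, so your fallback is never needed; if it were, note that with four $\alpha$-denominators the cleared polynomial identity has degree up to $4$, not $3$.
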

\medskip
We now develop several tools for the main recursive proposition (see
Proposition~\ref{lem:main-recurse}). For each type, fix a sequence
$S=(s_0,\ldots,s_{L-1})$ and a parameter $T$
as in Table~\ref{tab:param}.
For each induction chain, fix a pair $(p,r)$
as in Table~\ref{tab:startparams}.
Thus $S,L,T$ depend only on the type, whereas
$p,r$ depend on the chosen induction chain. Denote that \(A\) is the first \(\alpha\)-denominator listed in Table~\ref{tab:denoms} for certain type. For the given $r$, define $f:\mathbb Z\to\mathbb Z$ by
\[
f(x)=x+s_{(x+r)\bmod L},
\]
here $s_{(x+r)\bmod L}$ is called the \emph{increment}. The parameters
$L$ and $T$ corresponding to the all nine types are given in Table~\ref{tab:param}.
\medskip
\begin{center}
\small
\begin{tabular}{c|c|c|c|c}
\toprule
Type & $A$ & $S$ & $L$ & $T$ \\
\midrule
0 & $n-2$ & $(2,5,5)$ & $3$ & $12$ \\
\bottomrule
\end{tabular}
\end{center}
Combined with the values of the $\alpha$-denominators in Table~\ref{tab:denoms}, one
sees that the values of $A+s_{(x+r)\bmod L}$ are precisely all possible
values of the $\alpha$-denominators other than $A$. For every pair
$(p,r)$ actually used in the appendix, we have
\begin{equation}\label{eq:growth}
x>p\implies f(x)>p.
\end{equation}
Since the minimum step is no less than $-3$, it suffices to check the
inequality for $p+1,p+2,p+3$ against the $(p,r)$ in the Appendix~\ref{app:wx}.
Moreover, since $T$ is a multiple of $L$, the same property holds for
$(p+T)$. The map
\[
x\bmod L\longmapsto f(x)\bmod L
\]
is a permutation. When each residue cycle is traversed once, its length and total increment all are as shown in Table~\ref{tab:cycles}. 
\begin{center}
\small
\begin{tabular}{c|l}
\toprule
Type & (cycle length, increment per cycle) \\
\midrule
0 & $(3,12)$ \\
\bottomrule
\end{tabular}
\end{center}
\medskip
Define the \emph{orbit}
\[
O(x)=\{\ldots,f^{-2}(x),f^{-1}(x),x,f(x),f^2(x),\ldots\}.
\]
The total increment of each residue cycle is positive and divides $T$; hence every
integer orbit tends to $+\infty$ forward and to $-\infty$ backward,
and $x$ and $x+T$ lie on the same orbit.
\medskip
We next introduce the crossing set, which records exactly where the orbits of $f$ cross the threshold $p$.
Define the \emph{crossing set}
\[
J=\{x\in\mathbb Z:x\le p<f(x)\}.
\]
For $x\in J$, set $z=f(x)$. Starting from $z$ and iterating $f$ until
first reaching $z+T$, denote the sequence before the endpoint by
\[
W_x=(z_1,z_2,\cdots,z_\ell),\qquad z_1=z,\quad
f(z_j)=z_{j+1},\quad f(z_\ell)=z+T.
\]
\begin{lemma}\label{lem:wx}
The sequences $W_x$ $(x\in J)$ are pairwise disjoint, and
\[
\bigcup_{x\in J}W_x=\{p+1,p+2,\cdots,p+T\}.
\]
\end{lemma}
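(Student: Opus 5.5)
The plan is to exploit the structure of $f$: it is a bijection of $\mathbb Z$, it commutes with translation by $T$, and every orbit increases to $+\infty$. First I will check that $f$ is injective. Suppose $f(x)=f(y)$. Since $f(x)\bmod L$ depends only on $x\bmod L$, and the induced map on residues is a permutation, we get $x\equiv y\pmod L$. The increments are then equal, so $x=y$.

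Surjectivity works the same way. Given $w$, pick the residue $\rho$ with $f(\rho)\equiv w\pmod L$. Then $x=w-s_{(\rho+r)\bmod L}$ satisfies $x\equiv\rho$ and hence $f(x)=w$. Because $L\mid T$, we also have $f(x+T)=f(x)+T$. Finally, by the discussion before the lemma, $x$ and $x+T$ lie on the same orbit, and $x+T=f^{m}(x)$ for some $m>0$, since orbits tend to $+\infty$ forward.

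Next I will describe each $W_x$ orbitwise. Fix an orbit $O$. Since it tends to $-\infty$ backward and to $+\infty$ forward, it crosses the threshold $p$. Property \eqref{eq:growth} says that once an element exceeds $p$, all later iterates exceed $p$. Hence there is exactly one $x\in O$ with $x\le p<f(x)$, and every element of $O$ that is $>p$ has the form $f^k(z)$ with $k\ge0$ and $z=f(x)$. So $J$ meets each orbit in exactly one point, and distinct $x\in J$ lie in distinct orbits.

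The sequence $W_x$ runs along the orbit of $z$ from $z$ up to, but excluding, the first hit of $z+T$. For $x\ne x'$ in $J$ the sets $W_x$ and $W_{x'}$ lie in different orbits, so they are disjoint. Within a single $W_x$ no element repeats, because $f$ is a bijection with no periodic points: orbits are strictly unbounded in both directions.

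For the covering statement, I first show $W_x\subseteq\{p+1,\dots,p+T\}$. Every $z_j$ is $>p$ by \eqref{eq:growth}, since $z>p$. For the upper bound, note that $x-T<x\le p$. Using the shifted version of \eqref{eq:growth} stated before the lemma ($y>p+T\Rightarrow f(y)>p+T$), together with the equivariance $f^k(z)\le p+T \iff f^k(z-T)\le p$, I plan to show that $f^k(z)\le p+T$ for all $k$ below the index at which $z+T$ is reached.

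Concretely, the orbit element $x+T$ satisfies $x+T\le p+T<f(x+T)=z+T$. So $z+T$ is the unique first crossing of $p+T$ on this orbit. All orbit elements strictly before $z+T$ are therefore $\le p+T$: otherwise some earlier element would exceed $p+T$, and by the shifted growth property everything after it would too, contradicting $x+T\le p+T$. Hence $W_x\subseteq[p+1,p+T]$.

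Conversely, take any $y\in[p+1,p+T]$ and let $x\in J$ be the crossing point of its orbit. Then $y=f^k(z)$ for some $k\ge0$. Since $y\le p+T$, $y$ precedes the first crossing of $p+T$, which is $z+T$. So $y\in W_x$.

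The main obstacle is making sure the shifted threshold property ($p$ replaced by $p+T$) holds and that "before $z+T$" coincides with "at most $p+T$". Both reduce to $f(y+T)=f(y)+T$, which I will verify carefully from $L\mid T$.
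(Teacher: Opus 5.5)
Your proof is correct and follows essentially the same route as the paper's: each orbit crosses $p$ exactly once by \eqref{eq:growth}, the equivariance $f(y+T)=f(y)+T$ makes $(x+T)\mapsto z+T$ the unique crossing of $p+T$, and so $W_x$ is exactly the part of the orbit lying in $(p,p+T]$. You also verify points the paper leaves implicit, namely that $f$ is a bijection, that there are no periodic points, and that \eqref{eq:growth} holds with $p$ replaced by $p+T$; this makes the argument more complete without changing it.
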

\medskip
\subsection*{The main recursive proposition.}
Fix the type and \((p,r)\), and assume \(A>2p\), where \(A\) is the first \(\alpha\)-denominator listed in Table~\ref{tab:denoms} for this type. Define the region index
\begin{equation}\label{eq:rho}
\rho(x)=
\begin{cases}
0, & x\le p,\\
1, & p<x<A-p,\\
2, & x\ge A-p.
\end{cases}
\end{equation}
Assume the current cyclic arrangement satisfies the conditions of
Table~\ref{tab:denoms}, and also:\par
\medskip
\noindent\textbf{(H1)} For every $x\in J$, we have
\[
1\le A-x\le n,\qquad p<f(x)<A-p,
\]
and there exists an undirected edge
\begin{equation}\label{eq:expe}
\{A-x,f(x)\}.
\end{equation}
This is called the \emph{expansion edge} corresponding to $x$.\par
\medskip
\noindent\textbf{(H2)} For every non-expansion $\alpha$-edge, the sum of the
region indices of its two endpoints is $2$.\par
\medskip
\noindent\textbf{(H3)} If the denominator of a $\beta$-edge equals $qD$
for some $\alpha$-denominator $D$ and $q\in\{1/2,3/2,2\}$, then the sum
of the region indices of its two endpoints is $2q$.\par
\medskip
\noindent\textbf{(H4)} There exist two directed $\alpha$-edges
\[
a\to b,\qquad c\to d,
\]
not listed in \eqref{eq:expe}, whose four endpoints are pairwise distinct,
satisfying
\begin{equation}\label{eq:splicecond}
a+b=c+d=D=2(b+d),\qquad b\le p<d<A-p.
\end{equation}
In this case, $(a,b)$ and $(c,d)$ are called \emph{splicing edges}.\par
\medskip
We may now state the recursive proposition:
\begin{proposition}\label{lem:main-recurse}
If a cyclic arrangement of length $n$ satisfies the denominator
multiplicity conditions of Table~\ref{tab:denoms} and hypotheses (H1)--(H4), then a
new cyclic arrangement of length $n+2T$ can be constructed: the new
arrangement still satisfies the multiplicity pattern of Table~\ref{tab:denoms} and
all hypotheses (H1)--(H4), and can still be transformed into the
corresponding linear arrangement via Proposition~\ref{lem:splice}. Hence the above
expansion operation can be iterated indefinitely.
\end{proposition}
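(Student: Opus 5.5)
Fix the type and $(p,r)$, and write $n'=n+2T$. The new arrangement is obtained by inserting the $2T$ new numbers $n+1,\dots,n+2T$ and by shifting some existing numbers: every value $y$ in region $2$ ($y\ge A-p$) becomes $y+2T$, while values in regions $0$ and $1$ stay fixed. Under $n\mapsto n+2T$, the first $\alpha$-denominator $A$ becomes $A'=A+2T$, and every $\alpha$-denominator $A+s$ becomes $A+s+2T$. By (H2), every non-expansion $\alpha$-edge has region indices summing to $2$. So it is either $\{0,2\}$, and exactly one endpoint shifts by $2T$, or $\{1,1\}$. Only the $\{0,2\}$ case is automatic, so the construction has to treat $\{1,1\}$ edges too. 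My plan is to use the freedom differently. Region $1$ is the interval $(p,A-p)$, and it grows to $(p,A'-p)$, gaining $2T$ new interior positions. I relabel so that region $1$ values $y$ with $y\ge p+T$ roughly shift by $T$, region $2$ values shift by $2T$, and the new values $p+1,\dots,p+T$ and their mirror partners $A'-p-T,\dots,A'-p-1$ are the inserted material. Concretely, I take the map $y\mapsto y$ on region $0$, $y\mapsto y+T$ on region $1$ and $y\mapsto y+2T$ on region $2$. Then any edge whose region indices sum to $2$ gains exactly $2T$, which handles (H2) edges of both kinds $\{0,2\}$ and $\{1,1\}$. For a $\beta$-edge with denominator $qD$, (H3) gives a region sum of $2q$, so it gains $2qT$. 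This is exactly the change in $qD$ when $D\mapsto D+2T$, since the $\beta$-values are $qD$ with $D$ depending affinely with slope $1$ on $n$ (check against Table~\ref{tab:denoms}). So all old edges keep their denominator type.

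The inserted values are the $T$ numbers $p+1,\dots,p+T$ and the $T$ numbers $A'-p-T,\dots,A'-p-1$, which together with the relabelled old ones give exactly $\{1,\dots,n'\}$. Each expansion edge $\{A-x,f(x)\}$, for $x\in J$, has $A-x$ in region $2$ and $f(x)$ in region $1$ by (H1). I cut this edge and insert a zigzag path built from $W_x=(z_1,\dots,z_\ell)$ via Lemma~\ref{lem:wx}. Going from $A'-x$, one consecutive pair sums to $A'$. Alternate pairs then have the form $\{z_j,\,A'-z_{j+1}\}$ and $\{A'-z_{j+1},\,z_{j+1}\}$, so the sums are $A'+z_j-z_{j+1}=A'-s$ for increments $s$, plus the pairs summing to $A'$, with the path ending at the shifted $f(x)+T=z+T$. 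By the remark after Table~\ref{tab:param}, the sums $A'\pm$ increment are valid $\alpha$-denominators for $n'$. The sign convention (whether it is $A+s$ or $A-s$) must match the definition of $f$, and I would fix the orientation of the zigzag accordingly. Lemma~\ref{lem:wx} guarantees that the $z$'s over all $x\in J$ exactly exhaust $\{p+1,\dots,p+T\}$, and their mirrors exhaust the new top block. So every new value is used once. The last vertex of $W_x$ reconnects to $z+T=f(x)+T$, which is the relabelled old $f(x)$.

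The denominator count is then checked against Table~\ref{tab:denoms}. The number of inserted edges of each $\alpha$-value must equal the increase of its affine multiplicity under $n\mapsto n+2T$. This follows from the cycle data in Table~\ref{tab:cycles}: each residue cycle of increment total dividing $T$ contributes a fixed number of steps of each increment over a $T$-window. The new hypotheses then need verifying. For (H1), the crossing set $J$ depends only on $(p,r)$ and is unchanged, and the new expansion edge for $x$ is the last inserted edge $\{A'-x',\,\cdot\}$; I would pick the zigzag so that it ends with an edge $\{A'-x,f(x)\}$, using the $x\mapsto x+T$ orbit periodicity. For (H2) and (H3) on the new edges, the zigzag edges pair a region-1 value in $(p,p+T]$ with a region-2 mirror, giving sum $1+2$. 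This looks wrong, so the positions must instead be arranged such that the inserted low block lies in region $0$ relative to the new threshold or pairs are $\{1,1\}$. Resolving this bookkeeping via \eqref{eq:growth} and the condition $p<f(x)<A-p$ is, I expect, the main obstacle. For (H4), the splicing edges have $b\le p$ unshifted and $p<d<A-p$ shifted by $T$, and $a,c$ lie in regions $2$ and $1$ respectively by (H2), so $a+b$ and $c+d$ gain $2T$. Then $2(b+d)$ also gains $2T$, and $D'=2(b+d')$ holds, which keeps Proposition~\ref{lem:splice} applicable.
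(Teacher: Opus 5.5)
Your architecture matches the paper's: the embedding $y\mapsto y+T\rho(y)$, the gaps $[p+1,p+T]$ and $[A'-p-T,A'-p-1]$, splicing the segments $W_x$ into the expansion edges, and the (H4) computation. But the verification of (H1)--(H3) for the new cycle is left undone, and the obstacle you flag rests on a miscalculation.

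First, the zigzag you describe is wrong. A pair $\{z_j,A'-z_{j+1}\}$ has sum $A'-s$, which in general is not a denominator for $n'$ (in type $0$, $A'-2=n'-4$). The path has to be
\[
(A'-x,\ z_1,\ A'-z_1,\ z_2,\ A'-z_2,\ \ldots,\ z_\ell,\ A'-z_\ell,\ z+T).
\]
Its first sum is $A'+f(x)-x$, the old expansion-edge denominator raised by $2T$. The remaining sums alternate between $A'$ and $A'+f(z_j)-z_j$, and the last edge has the right sum exactly because $f(z_\ell)=z+T$. The first edge is $\{A'-x,z_1\}=\{A'-x,f(x)\}$, where $f(x)$ is now a newly inserted value. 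So the expansion edges required by (H1) appear automatically, and nothing has to be chosen about how the path ends. For the count, each $z\in[p+1,p+T]$ contributes one sum $A'+s_{(z+r)\bmod L}$. These $z$ are $T$ consecutive integers with $L\mid T$, so each $s_k$ occurs $T/L$ times; this is more direct than going through Table~\ref{tab:cycles}.

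Second, the mirrors $A'-z_j$ are not in region $2$. For $n'$, region $2$ begins at $A'-p$, while, using $A>2p$,
\[
p<A+T-p=A'-p-T\le A'-z_j\le A'-p-1,
\]
and likewise $p<z_j\le p+T<A'-p$. So all $2T$ inserted values lie in the new region $1$. Every inserted edge except $\{A'-x,z_1\}$ therefore joins two region-$1$ vertices, including the final edge to $z+T$, which is the image of the old region-$1$ value $z$. This gives (H2) for the new edges with no further bookkeeping.

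You also need, and do not state, that the embedding preserves the region index of every old value:
\begin{itemize}
\item $y\le p$ is fixed;
\item $(p,A-p)$ maps into $(p+T,A-p+T)\subset(p,A'-p)$;
\item $y\ge A-p$ maps to values $\ge A'-p$.
\end{itemize}
Your gain-of-$2T$ and gain-of-$2qT$ computations only show that old denominators transform correctly. Region preservation is what makes (H2) and (H3) hold again for the unchanged edges of the new arrangement.
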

We have sketched the main ideas of the proof in this introduction. In Section~\ref{sec:proof}, we prove the propositions and lemmas stated above. In Section~\ref{sec:base}, we discuss the base cases and show that the induction chains together with the finite exceptions cover all $n>7$. This completes the proof of Theorem~\ref{thm:main}.
\section{Proofs}\label{sec:proof}
In this section we prove the propositions and lemmas stated above. We first show that a cyclic arrangement satisfying the conditions of Proposition~\ref{lem:splice} gives rise to a linear arrangement with the same reciprocal sum. We then check that, with the prescribed multiplicities, the sum of the reciprocals of all denominators equals $1$ (see Proposition~\ref{lem:recip})., and prove that the sequences $W_x$ defined in the introduction are pairwise disjoint and cover $(p,p+T]$ (see Lemma~\ref{lem:wx}). The full tables of denominator multiplicities, parameters, and cycle lengths for all nine types are showed in this section alongside the relevant proofs. Finally, we prove the main inductive step (see Proposition~\ref{lem:main-recurse}): inserting $2T$ new elements into an arrangement satisfying (H1)--(H4) produces a larger arrangement that again satisfies (H1)--(H4), so that the construction can be repeated indefinitely.
\subsection*{Proof of Proposition~\ref{lem:splice}}
The arrangement \(\tau_\ell\) remains a permutation of \(\{1, \dots, n\}\).
Compared with the original cycle, the two directed edges $a\to b$ and
$c\to d$ (both with denominator $D$) are removed, and a new undirected
edge $\{b,d\}$ (with denominator $D/2$) is added. Hence
\[
L(\tau_\ell)-C(\tau_c)=-\frac{1}{D}-\frac{1}{D}+\frac{1}{D/2}=0.
\]

\begin{table}[ht]
\centering
\caption{Denominators and their multiplicities}\label{tab:denoms}
\footnotesize
\setlength{\tabcolsep}{5pt}
\begin{tabular}{c|l|l}
\toprule
Type & $\alpha$-denominator (multiplicity) & $\beta$-denominator (multiplicity) \\
\midrule
0 &
\begin{tabular}[t]{@{}r@{\;}l@{}}
$n-2$   & $((n-6)/2)$  \\
$n$     & $((n-12)/6)$ \\
$n+3$   & $((n+3)/3)$
\end{tabular} &
\begin{tabular}[t]{@{}r@{\;}l@{}}
$3(n-2)/2$ & $(3)$ \\
$n/2$      & $(1)$
\end{tabular} \\
\addlinespace
2 &
\begin{tabular}[t]{@{}r@{\;}l@{}}
$n-4$   & $((n-8)/2)$  \\
$n+1$   & $((n+1)/3)$  \\
$n+10$  & $((n-2)/6)$
\end{tabular} &
\begin{tabular}[t]{@{}r@{\;}l@{}}
$3(n-4)/2$   & $(3)$ \\
$(n+10)/2$   & $(1)$
\end{tabular} \\
\addlinespace
5 &
\begin{tabular}[t]{@{}r@{\;}l@{}}
$n-4$   & $((n-5)/2)$  \\
$n-5$   & $((n-17)/6)$ \\
$n+3$   & $((n+3)/4)$  \\
$n+7$   & $((n+7)/12)$
\end{tabular} &
\begin{tabular}[t]{@{}r@{\;}l@{}}
$2(n-4)$      & $(1)$ \\
$3(n-5)/2$    & $(3)$
\end{tabular} \\
\addlinespace
11 &
\begin{tabular}[t]{@{}r@{\;}l@{}}
$n-4$   & $((n-5)/2)$   \\
$n-7$   & $((n-23)/12)$ \\
$n+1$   & $((n+1)/12)$  \\
$n+3$   & $((n+1)/3)$
\end{tabular} &
\begin{tabular}[t]{@{}r@{\;}l@{}}
$2(n-4)$       & $(1)$ \\
$3(n-7)/2$     & $(2)$ \\
$3(n+3)/2$     & $(1)$
\end{tabular} \\
\addlinespace
1 &
\begin{tabular}[t]{@{}r@{\;}l@{}}
$n+1$   & $((n-3)/2)$ \\
$n+5$   & $((n-1)/3)$ \\
$n-1$   & $((n-1)/6)$
\end{tabular} &
\begin{tabular}[t]{@{}r@{\;}l@{}}
$(n+1)/2$ & $(1)$ \\
$(n+5)/2$ & $(1)$
\end{tabular} \\
\addlinespace
7 &
\begin{tabular}[t]{@{}r@{\;}l@{}}
$n-2$   & $((n-3)/2)$ \\
$n-3$   & $((n-7)/6)$ \\
$n-1$   & $((n-1)/6)$ \\
$n+1$   & $((n-7)/6)$
\end{tabular} &
\begin{tabular}[t]{@{}r@{\;}l@{}}
$2(n-2)$      & $(1)$ \\
$3(n-3)/2$    & $(1)$ \\
$3(n+1)/2$    & $(2)$
\end{tabular} \\
\addlinespace
3 &
\begin{tabular}[t]{@{}r@{\;}l@{}}
$n-2$   & $((n-3)/2)$ \\
$n-3$   & $((n-3)/6)$ \\
$n-1$   & $((n-9)/6)$ \\
$n+1$   & $((n-3)/6)$
\end{tabular} &
\begin{tabular}[t]{@{}r@{\;}l@{}}
$2(n-2)$      & $(1)$ \\
$3(n-1)/2$    & $(2)$ \\
$3(n+1)/2$    & $(1)$
\end{tabular} \\
\addlinespace
4 &
\begin{tabular}[t]{@{}r@{\;}l@{}}
$n-2$   & $((n-6)/2)$  \\
$n-4$   & $((n-4)/12)$ \\
$n+2$   & $((n-10)/6)$ \\
$n+4$   & $((n+4)/4)$
\end{tabular} &
\begin{tabular}[t]{@{}r@{\;}l@{}}
$3(n-2)/2$  & $(3)$ \\
$(n+2)/2$   & $(1)$
\end{tabular} \\
\addlinespace
10 &
\begin{tabular}[t]{@{}r@{\;}l@{}}
$n$     & $((n-4)/2)$ \\
$n+2$   & $((n+2)/3)$ \\
$n+8$   & $((n-4)/6)$
\end{tabular} &
\begin{tabular}[t]{@{}r@{\;}l@{}}
$n/2$      & $(1)$ \\
$(n+8)/2$  & $(1)$
\end{tabular} \\
\bottomrule
\end{tabular}
\end{table}

\subsection*{Proof of Proposition~\ref{lem:recip}}
Take type $0$ as an example:
\[
\frac{(n-6)/2}{n-2}+\frac{(n-12)/6}{n}+\frac{(n+3)/3}{n+3}
+\frac{3}{3(n-2)/2}+\frac{1}{n/2}
=\frac12+\frac16+\frac13=1.
\]
After merging the $\beta$-denominators as multiples of the
corresponding $\alpha$-denominators, the other eight types reduce
respectively to
\[
\text{Type 2: }\tfrac12+\tfrac13+\tfrac16,\qquad
\text{Type 5: }\tfrac12+\tfrac16+\tfrac14+\tfrac1{12},
\]
\[
\text{Type 11: }\tfrac12+\tfrac1{12}+\tfrac1{12}+\tfrac13,\qquad
\text{Type 1: }\tfrac12+\tfrac13+\tfrac16,
\]
\[
\text{Type 7: }\tfrac12+\tfrac16+\tfrac16+\tfrac16,\qquad
\text{Type 3: }\tfrac12+\tfrac16+\tfrac16+\tfrac16,
\]
\[
\text{Type 4: }\tfrac12+\tfrac1{12}+\tfrac16+\tfrac14,\qquad
\text{Type 10: }\tfrac12+\tfrac13+\tfrac16.
\]
The total number of denominators being $n$ can be checked by direct
computation.
\subsection*{Proof of Lemma~\ref{lem:wx}}
By \eqref{eq:growth}, an orbit cannot return from $>p$ to $\le p$. Each orbit crosses from $\le p$ to $>p$ at a unique step $x\mapsto f(x)$ with $x\in J$. The same
holds for $p+T$; the unique crossing is
\[
(x+T)\longmapsto f(x)+T=z+T.
\]
Therefore, the portion of each orbit lying in the interval $(p,p+T]$
is exactly the corresponding sequence $W_x$. Different orbits are
disjoint, so all $W_x$ are pairwise disjoint; and since every integer
belongs to exactly one orbit, every integer in
$\{p+1,p+2,\ldots,p+T\}$ belongs to some $W_x$, i.e.\
$\bigcup_{x\in J}W_x=\{p+1,p+2,\ldots,p+T\}$.

\begin{table}[ht]
\centering
\caption{Parameter table}\label{tab:param}
\begin{tabular}{c|c|c|c|c}
\toprule
Type & $A$ & $S$ & $L$ & $T$ \\
\midrule
0  & $n-2$ & $(2,5,5)$                       & 3 & 12 \\
2  & $n-4$ & $(14,5,5)$                      & 3 & 24 \\
5  & $n-4$ & $(-1,7,-1,7,11,7)$               & 6 & 18 \\
11 & $n-4$ & $(-3,7,5,7,7,7)$                 & 6 & 36 \\
1  & $n+1$ & $(-2,4,4)$                      & 3 & 6  \\
7  & $n-2$ & $(-1,-1,1,1,3,3)$                & 6 & 6  \\
3  & $n-2$ & $(-1,-1,1,1,3,3)$                & 6 & 6  \\
4  & $n-2$ & $(-2,6,4,6,4,6)$                 & 6 & 6  \\
10 & $n$   & $(2,2,8)$                       & 3 & 12 \\
\bottomrule
\end{tabular}
\end{table}

\begin{table}[ht]
\centering
\caption{Cycle lengths and increments}\label{tab:cycles}
\begin{tabular}{c|l}
\toprule
Type & (cycle length, increment per cycle) \\
\midrule
0  & $(3,12)$ \\
2  & $(3,24)$ \\
5  & $(2,6),\ (2,6),\ (2,18)$ \\
11 & $(4,18),\ (2,12)$ \\
1  & $(3,6)$ \\
7  & $(6,6)$ \\
3  & $(6,6)$ \\
4  & $(3,6),\ (1,6),\ (1,6),\ (1,6)$ \\
10 & $(3,12)$ \\
\bottomrule
\end{tabular}
\end{table}

\subsection*{Proof of Proposition~\ref{lem:main-recurse}}
Set
\[
n'=n+2T,\qquad A'=A+2T.
\]
Define an embedding $E:\{1,2,\ldots,n\}\to\{1,2,\ldots,n'\}$ by
\begin{equation}\label{eq:E}
E(x)=x+T\rho(x)=
\begin{cases}
x, & x\le p,\\
x+T, & p<x<A-p,\\
x+2T, & x\ge A-p.
\end{cases}
\end{equation}
Under this map, the image of \{1,2,\ldots,n\}\ leaves exactly two intervals uncovered:
\begin{equation}\label{eq:gaps}
[p+1,p+T],\qquad [A+T-p,A+2T-p-1].
\end{equation}
Since $A>2p$, these two intervals are disjoint, and they correspond to
each other under the map $z\mapsto A'-z$. Hence both intervals have
length $T$ and together contain $2T$ integers. For $x\in J$, set $z=f(x)$, and let
$W_x=(z_1,\ldots,z_\ell)$. The old edge $(A-x,z)$ becomes
$(A'-x,z+T)$ under embedding E. Now we replace it by the path
\begin{equation}\label{eq:path}
(A'-x,z_1,A'-z_1,z_2,A'-z_2,\cdots,z_\ell,A'-z_\ell,z+T).
\end{equation}
If the original cycle traverses this edge in the reverse direction,
use the reverse order of \eqref{eq:path}.\par
We now check that the new cyclic arrangement still
satisfies the conditions of Table~\ref{tab:denoms}. By Lemma~\ref{lem:wx}, all $z_j$ exactly
fill $[p+1,p+T]$. All $A'-z_j$ exactly fill the second gap of \eqref{eq:path} . The
expansion edges for different $x$ are distinct, because their
region-$2$ endpoints $A-x$ are different. Hence, the new numbers are distinct and exactly fill the gap \eqref{eq:gaps}. Each operation replaces an edge of the cycle with a path , so the result is still a single cycle rather than multiple small cycles.
If an $\alpha$-edge has endpoint region indices summing to $2$, then under \eqref{eq:E} its denominator increases by $2T$, which is equivalent to replacing $n$ by $n'$ in the original denominator.
If a $\beta$-edge has a denominator $qD$, where $D$ is an
$\alpha$-denominator, then the sum of its endpoint region
indices is $2q$, so
\[
qD\longmapsto qD+2qT=q(D+2T).
\]
Thus the relations between the $\alpha$- and
$\beta$-denominators are preserved. The denominator of the first edge in \eqref{eq:path} is
\[
A'-x+z_1=A'+f(x)-x;
\]
this corresponds to the replaced old expansion edge, except that its
denominator has increased by $2T$. The subsequent denominators are
successively
\[
A',A'+f(z_1)-z_1,A',A'+f(z_2)-z_2,\cdots,A',A'+f(z_\ell)-z_\ell.
\]
Thus the new denominator multiset consists of the original
multiset with $n$ replaced by $n'$, together with $T$
additional copies of $A'$ and one additional copy of
$A'+s_{(z+r)\bmod L}$ for each
$z\in\{p+1,\ldots,p+T\}$. Since \(T\) is a multiple of \(L\), as \(z\) runs through $[p+1,p+T]$, each residue \((z+r)\bmod L\) occurs exactly \(T/L\) times; hence each step size \(s_k\) appears the same number of times. The increases in multiplicity are listed in
Table~\ref{tab:newmult}. Adding these increases to the
original multiplicities gives exactly the multiplicities
in Table~\ref{tab:denoms} with $n$ replaced by $n+2T$. The
number of $\beta$-denominators is unchanged, so the reciprocal sum of
the new cycle is still $1$.\par
\begin{table}[ht]
\centering
\caption{Additional multiplicities of $\alpha$-denominators}\label{tab:newmult}
\begin{tabular}{c|c}
\toprule
Type & Additional multiplicity in the order of Table~\ref{tab:denoms} \\
\midrule
0  & $(12,4,8)$ \\
2  & $(24,16,8)$ \\
5  & $(18,6,9,3)$ \\
11 & $(36,6,6,24)$ \\
1  & $(6,4,2)$ \\
7  & $(6,2,2,2)$ \\
3  & $(6,2,2,2)$ \\
4  & $(6,1,2,3)$ \\
10 & $(12,8,4)$ \\
\bottomrule
\end{tabular}
\end{table}
Now, we check that the new cyclic arrangement still satisfies
(H1)--(H4). Under the map \(E\), the old numbers retain their original
region indices. All new numbers lie in the new middle region, with
index $1$. For every $x\in J$, the first edge of the inserted path is
precisely the new expansion edge
\[
\{A'-x,f(x)\}.
\]
For the remaining new edges in the inserted path, the sum of the
endpoint region indices is $2$. The region relations of unchanged
edges are also preserved. Moreover,
\[
1\le A'-x\le n',\qquad p<f(x)<A'-p,\qquad A'>2p.
\]
Hence (H1)--(H3) continue to hold. The two splicing edges are not
replaced, and become
\[
(a+2T,b),\qquad(c+T,d+T).
\]
Both have denominator $(D+2T)$, and
\[
b+(d+T)=\frac{D}{2}+T=\frac{(D+2T)}{2}.
\]
Hence (H4) continues to hold, and Proposition~\ref{lem:splice} can be applied to obtain
the linear arrangement.

\section{Base cases and coverage}\label{sec:base}

\subsection*{Base cases.}
Appendix~\ref{app:start} gives all $28$ base cases. Their parameters are
shown in Table~\ref{tab:startparams} below; in the ``reversed'' column, ``yes'' means that
the entire cycle in the appendix is first reversed and then all
directed edges are read; ``no'' means reading in the original
direction.
\begin{table}[ht]
\centering
\caption{Parameters of base cases}\label{tab:startparams}
\small
\begin{tabular}{c|c|c|c|c|c|c}
\toprule
Type & $n$ & $p$ & $r$ & Reversed & $(a,b)$ & $(c,d)$ \\
\midrule
0  & 24 & 6  & 1 & no  & $(19,3)$  & $(14,8)$  \\
0  & 30 & 6  & 2 & no  & $(23,5)$  & $(19,9)$  \\
0  & 36 & 6  & 2 & no  & $(28,6)$  & $(23,11)$ \\
0  & 42 & 6  & 2 & yes & $(37,3)$  & $(23,17)$ \\
2  & 56 & 14 & 1 & no  & $(49,3)$  & $(29,23)$ \\
2  & 62 & 17 & 0 & no  & $(54,4)$  & $(33,25)$ \\
2  & 68 & 14 & 1 & no  & $(52,12)$ & $(44,20)$ \\
2  & 74 & 14 & 1 & no  & $(62,8)$  & $(43,27)$ \\
2  & 80 & 14 & 1 & no  & $(74,2)$  & $(40,36)$ \\
2  & 86 & 14 & 1 & no  & $(85,11)$ & $(59,37)$ \\
2  & 92 & 14 & 1 & no  & $(83,5)$  & $(49,39)$ \\
2  & 98 & 14 & 1 & no  & $(94,14)$ & $(68,40)$ \\
5  & 53 & 10 & 4 & no  & $(39,9)$  & $(33,15)$ \\
5  & 65 & 11 & 3 & no  & $(63,5)$  & $(39,29)$ \\
5  & 77 & 14 & 1 & no  & $(75,5)$  & $(45,35)$ \\
11 & 47 & 8  & 4 & yes & $(43,7)$  & $(32,18)$ \\
11 & 59 & 8  & 4 & no  & $(57,5)$  & $(36,26)$ \\
11 & 71 & 8  & 0 & no  & $(71,1)$  & $(37,35)$ \\
11 & 83 & 8  & 0 & yes & $(83,3)$  & $(46,40)$ \\
11 & 95 & 8  & 0 & no  & $(95,1)$  & $(49,47)$ \\
11 & 107& 8  & 0 & no  & $(105,5)$ & $(60,50)$ \\
1  & 25 & 3  & 0 & yes & $(25,1)$  & $(14,12)$ \\
7  & 31 & 4  & 0 & yes & $(26,4)$  & $(19,11)$ \\
3  & 27 & 3  & 0 & yes & $(25,3)$  & $(17,11)$ \\
3  & 33 & 3  & 1 & no  & $(29,3)$  & $(19,13)$ \\
4  & 28 & 3  & 3 & yes & $(24,2)$  & $(15,11)$ \\
10 & 34 & 5  & 2 & no  & $(33,3)$  & $(21,15)$ \\
10 & 46 & 5  & 2 & no  & $(44,4)$  & $(28,20)$ \\
\bottomrule
\end{tabular}
\end{table}
By direct finite computation, we obtain the following
proposition:
\begin{proposition}\label{prop:start}
Every cycle listed in Appendix~\ref{app:start} satisfies the denominator
multiplicities of Table~\ref{tab:denoms} and (H1)--(H4).
\end{proposition}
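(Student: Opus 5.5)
The statement is a finite verification: 28 explicit cycles, each checked against the multiplicity pattern of Table~\ref{tab:denoms} and against (H1)--(H4). My plan is to organise this as a mechanical procedure, run once per row of Table~\ref{tab:startparams}. For each base case I read off the type $t$, the length $n$, the pair $(p,r)$ and the splicing edges. If the ``reversed'' flag is set, I reverse the cycle first. I then compute $A$ from Table~\ref{tab:param}, the function $f(x)=x+s_{(x+r)\bmod L}$, and the region index $\rho$ from \eqref{eq:rho}.

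\textbf{Step 1 (bijectivity and multiplicities).} I confirm that the listed cycle is a permutation of $\{1,\ldots,n\}$. I then compute the multiset $D(\tau)$ of the $n$ cyclic adjacent sums and compare it with the $\alpha$- and $\beta$-multiplicities of Table~\ref{tab:denoms} evaluated at this $n$. By Proposition~\ref{lem:recip}, this automatically gives $C(\tau)=1$. I also check $A>2p$, which is needed for \eqref{eq:rho} to make sense.

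\textbf{Step 2 (the crossing set and (H1)).} I enumerate $J$. Because every step of $f$ is at least $-3$ and the increments are bounded, $J$ lies in the window $[p-\max_k s_k+1,\,p]$. So $J$ is found by testing finitely many $x$ and keeping those with $f(x)>p$; equivalently, $J$ is in bijection with the residue cycles counted in Table~\ref{tab:cycles}, up to multiplicity $T/(\text{cycle increment})$. For each $x\in J$ I check $1\le A-x\le n$ and $p<f(x)<A-p$, and that $\{A-x,f(x)\}$ is an edge of the cycle. I also verify \eqref{eq:growth} at $p+1,p+2,p+3$, since Lemma~\ref{lem:wx} depends on it.

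\textbf{Step 3 ((H2)--(H4)).} For each edge other than the expansion edges, I classify its sum as an $\alpha$- or $\beta$-denominator. For an $\alpha$-edge I check $\rho(u)+\rho(v)=2$. For a $\beta$-edge with sum $qD$ I check $\rho(u)+\rho(v)=2q$. Finally, for the listed directed edges $a\to b$ and $c\to d$, I verify the following:
\begin{itemize}
\item both are directed edges of the (possibly reversed) cycle;
\item neither is an expansion edge;
\item the four endpoints are distinct;
\item $a+b=c+d=2(b+d)$ with this common value an $\alpha$-denominator;
\item $b\le p<d<A-p$.
\end{itemize}

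I expect no conceptual obstacle, since everything is a bounded computation. The main practical difficulty is Step 3 on the long cycles, for example type $11$ with $n=107$. There (H2) is easily violated at a single edge, and ambiguity can arise when one sum matches both an $\alpha$-value and $qD$ for some other $\alpha$-value, for small $n$. I would resolve this by fixing the classification through the multiplicities counted in Step 1, and I would carry out all checks by a short computer script rather than by hand, reporting the script's output.
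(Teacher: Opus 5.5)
Your plan is essentially the paper's own argument. The paper establishes this proposition by direct finite computation over the $28$ cycles, and your checklist covers exactly what that computation must verify: bijectivity, the multiplicities of Table~\ref{tab:denoms}, the crossing set $J$ and the expansion edges, the region-index conditions, and the splicing-edge conditions with the reversal flag.

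One side remark is wrong, though harmless because you enumerate $J$ directly over the window $[p-\max_k s_k+1,\,p]$. A residue cycle of length $k$ and increment $I$ splits into $I/L$ orbits of $f$, so it contributes $I/L$ elements to $J$, not $T/I$. For type $0$ this gives $|J|=12/3=4$, matching Appendix~\ref{app:wx}, whereas $T/I=1$.
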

\medskip
\subsection*{Coverage of all cases.}
\begin{proposition}\label{lem:cover}
The induction chains in Table~\ref{tab:chains} together with the finite exceptions
in the last column, i.e., smaller exceptional n not covered by the induction chains, cover all $n>7$.
\end{proposition}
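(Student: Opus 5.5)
The proof is a residue-class bookkeeping argument. Each base case in Table~\ref{tab:startparams} with length $n_0$ and type $t$ generates, by Proposition~\ref{prop:start} and repeated use of Proposition~\ref{lem:main-recurse}, an induction chain $\{n_0+2kT : k\ge 0\}$. Here $T$ is the parameter from Table~\ref{tab:param} for that type. For each of the nine types I would list the chains and show that, within the residue class of the type, they cover every $n$ from some threshold onward.

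\textbf{Step 1: the step length of each type.} The step is $2T$, so it equals $24$ for types $0$ and $10$, $48$ for type $2$, $36$ for type $5$, $72$ for type $11$, and $12$ for types $1,7,3,4$. The type is preserved along a chain: $2T$ is a multiple of $12$ in every case, so $n \bmod 12$ is unchanged. This also confirms that the modulo-$6$ types $0,2,3$ stay inside one class modulo $6$.

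\textbf{Step 2: the residues hit by the base cases.} For each type I check that the starting values form a complete residue system modulo $2T$ within the type's class.
\begin{itemize}
\item Type $0$: $24,30,36,42$ are $\{0,6,12,18\}\pmod{24}$, which is all $n\equiv 0\pmod 6$ modulo $24$. So every such $n\ge 24$ is covered.
\item Type $2$: $56,62,\dots,98$ are eight values, one per residue $\equiv 2\pmod 6$ modulo $48$. So every such $n\ge 56$ is covered.
\item Type $5$: $53,65,77$ are three residues modulo $36$, each $\equiv 5\pmod{12}$.
\item Type $11$: $47,59,\dots,107$ are six residues modulo $72$.
\item Types $1,7,4$: one base case each ($25,31,28$) with step $12$.
\item Type $3$: $27,33$ have step $12$ and together cover $n\equiv 3\pmod 6$, $n\ge 27$.
\item Type $10$: $34,46$ are $\{10,22\}\pmod{24}$.
\end{itemize}
In every type the chains therefore cover all $n$ in the class with $n$ at least the smallest base value.

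\textbf{Step 3: the leftover values.} The values $n>7$ below each threshold (for example $n<24$ with $n\equiv 0\pmod 6$, or $n<56$ with $n\equiv 2\pmod 6$) are exactly the finite exceptions in the last column of Table~\ref{tab:chains}. These are handled by the explicit arrangements in the appendix.

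\textbf{Main obstacle.} The only delicate point is the complete-residue check together with confirming that each chain's starting data satisfy the hypotheses of Proposition~\ref{lem:main-recurse}. This requires $A>2p$ and the growth condition \eqref{eq:growth}, and I would verify both directly from Table~\ref{tab:startparams}. Once that is done, the proof reduces to comparing the exception list against the gaps below each threshold, type by type.
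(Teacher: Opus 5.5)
Your proposal is correct and follows the paper's own argument: the base values of each type meet every residue class modulo the step $2T$ within that type's class exactly once, so every sufficiently large $n$ is reached, and the small leftovers are exactly the listed exceptions. Your residue computations and exception lists are right for all nine types; the check of $A>2p$ and \eqref{eq:growth} that you call the main obstacle is not part of this coverage statement, since the paper handles it separately in Proposition~\ref{prop:start}.
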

\begin{table}[ht]
\centering
\caption{Induction chains and finite exceptions}\label{tab:chains}
\small
\begin{tabular}{c|c|c|l}
\toprule
Type & base case of $n$ & Increment per step & finite exceptions  \\
\midrule
0  & $(24,30,36,42)$                          & 24 & $(12,18)$ \\
2  & $(56,62,68,74,80,86,92,98)$              & 48 & $(8,14,20,26,32,38,44,50)$ \\
5  & $(53,65,77)$                             & 36 & $(17,29,41)$ \\
11 & $(47,59,71,83,95,107)$                   & 72 & $(11,23,35)$ \\
1  & $(25)$                                   & 12 & $(13)$ \\
7  & $(31)$                                   & 12 & $(19)$ \\
3  & $(27,33)$                                & 12 & $(9,15,21)$ \\
4  & $(28)$                                   & 12 & $(16)$ \\
10 & $(34,46)$                                & 24 & $(10,22)$ \\
\bottomrule
\end{tabular}
\end{table}

\begin{proof}
Consider type $2$, which increments by 48 at each step. The numbers $n\equiv2\pmod6$
split into eight residue classes modulo $48$, and the starting values
\[
(56,62,68,74,80,86,92,98)
\]
meet each of these eight classes exactly once. Thus, for every sufficiently
large $n\equiv2\pmod6$, there is a unique starting value from which $n$ is
reached by repeatedly adding $48$; the remaining small values are
\[
(8,14,20,26,32,38,44,50).
\]
The other eight types are handled in the same way, with step sizes
$24,36,72,12,12,12,12,24$, respectively.
\end{proof}
Combining the base cases of Proposition~\ref{prop:start}, the inductive step of Proposition~\ref{lem:main-recurse}, and the coverage established in Proposition~\ref{lem:cover}, together with the explicit small exceptions listed in Appendix~\ref{app:exceptions}, Theorem~\ref{thm:main} holds.

\section*{Note on AI use}
Most of the construction and the recursive step in this paper were
proposed and refined through our interaction with GPT-5.6 Sol. In the course of exploration, we suggested constructing separately for different residue classes modulo $6$, and further unified the different residue classes by the recursive step.
We then conducted AI-assisted literature searches and found that Theorem~\ref{thm:main} was previously unproven. In the course of this search, we also collected numerous conjectures and results concerning permutations.\par
We independently checked the constructions and proofs for all cases
item by item, including the identity of the reciprocal sum, the
preservation of the conditions required by the inductive step, and the
coverage of all cases.
\appendix
\setcounter{tocdepth}{0}
\section{\texorpdfstring{Tables of $W_x$ (organized by type and $(p,r)$)}{Tables of Wx (organized by type and (p,r))}}\label{app:wx}
Each table fixes a type and $(p,r)$. The second column is $W_x$; the
third column is the next term, equal to the first term of the second
column plus $T$. Together, the second columns of each table contain
$p+1,\ldots,p+T$ exactly once each.
\begin{typecase}
Type 0, $(p=6,r=1)$.
\end{typecase}
\begin{center}
\begin{tabular}{c|c|c}
$x$ & $W_x$ & Next term \\
\hline
3 & $(8,10,15)$          & 20 \\
4 & $(9,14,16)$          & 21 \\
5 & $(7,12,17)$          & 19 \\
6 & $(11,13,18)$         & 23 \\
\end{tabular}
\end{center}
\begin{typecase}
Type 0, $(p=6,r=2)$.
\end{typecase}
\begin{center}
\begin{tabular}{c|c|c}
$x$ & $W_x$ & Next term \\
\hline
2 & $(7,9,14)$           & 19 \\
3 & $(8,13,15)$          & 20 \\
5 & $(10,12,17)$         & 22 \\
6 & $(11,16,18)$         & 23 \\
\end{tabular}
\end{center}
\begin{typecase}
Type 2, $(p=14,r=1)$.
\end{typecase}
\begin{center}
\begin{tabular}{c|c|c}
$x$ & $W_x$ & Next term \\
\hline
2  & $(16,21,26)$                & 40 \\
5  & $(19,24,29)$                & 43 \\
8  & $(22,27,32)$                & 46 \\
10 & $(15,20,34)$                & 39 \\
11 & $(25,30,35)$                & 49 \\
12 & $(17,31,36)$                & 41 \\
13 & $(18,23,37)$                & 42 \\
14 & $(28,33,38)$                & 52 \\
\end{tabular}
\end{center}
\begin{typecase}
Type 2, $(p=17,r=0)$.
\end{typecase}
\begin{center}
\begin{tabular}{c|c|c}
$x$ & $W_x$ & Next term \\
\hline
6  & $(20,25,30)$                & 44 \\
9  & $(23,28,33)$                & 47 \\
12 & $(26,31,36)$                & 50 \\
13 & $(18,32,37)$                & 42 \\
14 & $(19,24,38)$                & 43 \\
15 & $(29,34,39)$                & 53 \\
16 & $(21,35,40)$                & 45 \\
17 & $(22,27,41)$                & 46 \\
\end{tabular}
\end{center}
\begin{typecase}
Type 5, $(p=10,r=4)$.
\end{typecase}
\begin{center}
\begin{tabular}{c|c|c}
$x$ & $W_x$ & Next term \\
\hline
0 & $(11,18)$                                   & 29 \\
5 & $(12,23)$                                   & 30 \\
6 & $(17,24)$                                   & 35 \\
7 & $(14,13,20,19,26,25)$                       & 32 \\
9 & $(16,15,22,21,28,27)$                       & 34 \\
\end{tabular}
\end{center}
\begin{typecase}
Type 5, $(p=11,r=3)$.
\end{typecase}
\begin{center}
\begin{tabular}{c|c|c}
$x$ & $W_x$ & Next term \\
\hline
1  & $(12,19)$                                   & 30 \\
6  & $(13,24)$                                   & 31 \\
7  & $(18,25)$                                   & 36 \\
8  & $(15,14,21,20,27,26)$                       & 33 \\
10 & $(17,16,23,22,29,28)$                       & 35 \\
\end{tabular}
\end{center}
\begin{typecase}
Type 5, $(p=14,r=1)$.
\end{typecase}
\begin{center}
\begin{tabular}{c|c|c}
$x$ & $W_x$ & Next term \\
\hline
8  & $(15,26)$                                   & 33 \\
9  & $(20,27)$                                   & 38 \\
10 & $(17,16,23,22,29,28)$                       & 35 \\
12 & $(19,18,25,24,31,30)$                       & 37 \\
14 & $(21,32)$                                   & 39 \\
\end{tabular}
\end{center}
\begin{typecase}
Type 11, $(p=8,r=4)$.
\end{typecase}
\begin{center}
\begin{tabular}{c|c|c}
$x$ & $W_x$ & Next term \\
\hline
3 & $(10,15,22,27,34,39)$                            & 46 \\
4 & $(9,16,21,28,33,40)$                             & 45 \\
5 & $(12,19,26,23,30,37,44,41)$                      & 48 \\
6 & $(13,20,17,24,31,38,35,42)$                      & 49 \\
7 & $(14,11,18,25,32,29,36,43)$                      & 50 \\
\end{tabular}
\end{center}
\begin{typecase}
Type 11, $(p=8,r=0)$.
\end{typecase}
\begin{center}
\begin{tabular}{c|c|c}
$x$ & $W_x$ & Next term \\
\hline
3 & $(10,17,24,21,28,35,42,39)$                      & 46 \\
4 & $(11,18,15,22,29,36,33,40)$                      & 47 \\
5 & $(12,9,16,23,30,27,34,41)$                       & 48 \\
7 & $(14,19,26,31,38,43)$                            & 50 \\
8 & $(13,20,25,32,37,44)$                            & 49 \\
\end{tabular}
\end{center}
\begin{typecase}
Type 1, $(p=3,r=0)$.
\end{typecase}
\begin{center}
\begin{tabular}{c|c|c}
$x$ & $W_x$ & Next term \\
\hline
1 & $(5,9,7)$            & 11 \\
2 & $(6,4,8)$            & 12 \\
\end{tabular}
\end{center}
\begin{typecase}
Type 7, $(p=4,r=0)$.
\end{typecase}
\begin{center}
\begin{tabular}{c|c|c}
$x$ & $W_x$ & Next term \\
\hline
4 & $(7,6,5,8,9,10)$     & 13 \\
\end{tabular}
\end{center}
\begin{typecase}
Type 3, $(p=3,r=0)$.
\end{typecase}
\begin{center}
\begin{tabular}{c|c|c}
$x$ & $W_x$ & Next term \\
\hline
3 & $(4,7,6,5,8,9)$      & 10 \\
\end{tabular}
\end{center}
\begin{typecase}
Type 3, $(p=3,r=1)$.
\end{typecase}
\begin{center}
\begin{tabular}{c|c|c}
$x$ & $W_x$ & Next term \\
\hline
3 & $(6,5,4,7,8,9)$      & 12 \\
\end{tabular}
\end{center}
\begin{typecase}
Type 4, $(p=3,r=3)$.
\end{typecase}
\begin{center}
\begin{tabular}{c|c|c}
$x$ & $W_x$ & Next term \\
\hline
$-2$ & $(4)$   & 10 \\
$0$  & $(6)$   & 12 \\
$1$  & $(5,9,7)$ & 11 \\
$2$  & $(8)$   & 14 \\
\end{tabular}
\end{center}
\begin{typecase}
Type 10, $(p=5,r=2)$.
\end{typecase}
\begin{center}
\begin{tabular}{c|c|c}
$x$ & $W_x$ & Next term \\
\hline
0 & $(8,10,12)$          & 20 \\
3 & $(11,13,15)$         & 23 \\
4 & $(6,14,16)$          & 18 \\
5 & $(7,9,17)$           & 19 \\
\end{tabular}
\end{center}
\section{Induction base cases of cyclic arrangements}\label{app:start}
Each parenthesis below is interpreted as $(\tau(1),\ldots,\tau(n))$,
with the last entry adjacent to the first.
\begin{typecase}
Type 0, $(n=24)$.
\end{typecase}
\seq{(1, 21, 6, 16, 11, 22, 5, 17, 7, 15, 12, 10, 2, 20, 13, 14, 8, 19, 3, 24, 9, 18, 4, 23)}
\begin{typecase}
Type 0, $(n=30)$.
\end{typecase}
\seq{(1, 27, 15, 18, 10, 23, 5, 28, 14, 19, 9, 21, 7, 26, 2, 13, 20, 8, 25, 3, 30, 12, 16, 17, 11, 22, 6, 24, 4, 29)}
\begin{typecase}
Type 0, $(n=36)$.
\end{typecase}
\seq{(1, 33, 18, 16, 23, 11, 28, 6, 12, 22, 17, 19, 20, 14, 25, 9, 27, 7, 32, 2, 34, 5, 29, 10, 24, 15, 36, 3, 31, 8, 26, 13, 21, 30, 4, 35)}
\begin{typecase}
Type 0, $(n=42)$.
\end{typecase}
\seq{(1, 39, 6, 34, 11, 29, 16, 5, 35, 10, 30, 12, 28, 17, 23, 22, 18, 42, 3, 37, 8, 32, 13, 27, 15, 25, 20, 40, 2, 38, 7, 33, 9, 31, 14, 26, 19, 21, 24, 36, 4, 41)}
\begin{typecase}
Type 2, $(n=56)$.
\end{typecase}
\seq{(1, 51, 6, 46, 11, 41, 25, 27, 30, 22, 44, 8, 49, 3, 54, 24, 33, 19, 47, 5, 52, 14, 38, 28, 29, 23, 55, 2, 50, 16, 36, 21, 31, 26, 7, 45, 12, 40, 17, 35, 43, 9, 48, 4, 53, 13, 39, 18, 34, 32, 20, 37, 15, 42, 10, 56)}
\begin{typecase}
Type 2, $(n=62)$.
\end{typecase}
\seq{(1, 57, 6, 52, 20, 38, 34, 24, 39, 19, 44, 14, 58, 5, 53, 10, 48, 15, 43, 29, 7, 51, 12, 60, 3, 55, 8, 50, 13, 45, 18, 40, 32, 26, 46, 17, 41, 22, 36, 27, 31, 56, 2, 61, 11, 47, 16, 42, 21, 37, 35, 23, 49, 9, 54, 4, 59, 28, 30, 33, 25, 62)}
\begin{typecase}
Type 2, $(n=68)$.
\end{typecase}
\seq{(1, 63, 6, 58, 38, 26, 43, 21, 48, 16, 62, 2, 67, 11, 53, 25, 39, 30, 66, 3, 61, 8, 56, 22, 42, 27, 37, 41, 23, 46, 18, 51, 13, 65, 4, 60, 9, 55, 14, 50, 28, 36, 33, 31, 47, 17, 52, 12, 57, 7, 32, 64, 5, 59, 19, 45, 24, 40, 29, 35, 34, 44, 20, 49, 15, 54, 10, 68)}
\begin{typecase}
Type 2, $(n=74)$.
\end{typecase}
\seq{(1, 69, 6, 64, 11, 59, 25, 45, 30, 40, 35, 7, 63, 42, 33, 37, 38, 32, 73, 2, 68, 16, 54, 21, 49, 26, 44, 61, 9, 66, 4, 71, 13, 57, 18, 52, 23, 47, 28, 56, 14, 70, 5, 65, 19, 51, 24, 46, 29, 41, 43, 27, 48, 22, 62, 8, 67, 3, 72, 12, 58, 17, 53, 31, 39, 36, 34, 50, 20, 55, 15, 60, 10, 74)}
\begin{typecase}
Type 2, $(n=80)$.
\end{typecase}
\seq{(1, 75, 6, 70, 11, 65, 25, 51, 30, 46, 44, 32, 49, 27, 54, 22, 68, 8, 73, 3, 78, 12, 64, 17, 59, 31, 50, 26, 55, 21, 60, 16, 74, 2, 79, 35, 41, 40, 36, 45, 69, 7, 38, 43, 33, 48, 28, 62, 14, 76, 5, 71, 19, 57, 24, 52, 29, 47, 67, 9, 72, 4, 77, 13, 63, 18, 58, 23, 53, 37, 39, 42, 34, 56, 20, 61, 15, 66, 10, 80)}
\begin{typecase}
Type 2, $(n=86)$.
\end{typecase}
\seq{(1, 81, 6, 76, 47, 35, 52, 30, 57, 39, 43, 44, 38, 49, 33, 54, 28, 68, 14, 82, 5, 77, 19, 63, 24, 58, 29, 53, 34, 48, 75, 7, 41, 46, 36, 51, 31, 65, 17, 70, 12, 84, 3, 79, 8, 74, 22, 60, 27, 55, 32, 50, 73, 9, 78, 4, 83, 13, 69, 18, 64, 23, 59, 37, 45, 42, 40, 56, 26, 61, 21, 66, 16, 80, 2, 85, 11, 71, 25, 62, 20, 67, 15, 72, 10, 86)}
\begin{typecase}
Type 2, $(n=92)$.
\end{typecase}
\seq{(1, 87, 6, 82, 11, 77, 25, 63, 30, 58, 35, 53, 79, 9, 84, 4, 89, 13, 75, 18, 70, 23, 65, 37, 51, 42, 90, 3, 85, 8, 80, 22, 66, 27, 61, 32, 56, 46, 47, 41, 91, 2, 86, 16, 72, 21, 67, 26, 62, 40, 48, 45, 43, 59, 29, 64, 24, 69, 19, 83, 5, 88, 14, 74, 28, 60, 33, 55, 38, 50, 52, 36, 57, 31, 71, 17, 76, 12, 81, 7, 44, 49, 39, 54, 34, 68, 20, 73, 15, 78, 10, 92)}
\begin{typecase}
Type 2, $(n=98)$.
\end{typecase}
\seq{(1, 93, 6, 88, 11, 83, 25, 69, 30, 64, 35, 59, 49, 50, 44, 97, 2, 92, 16, 78, 21, 73, 26, 68, 40, 54, 45, 96, 3, 91, 8, 86, 22, 72, 27, 67, 32, 62, 46, 48, 51, 43, 65, 29, 70, 24, 75, 19, 89, 5, 94, 14, 80, 28, 66, 33, 61, 38, 56, 85, 9, 90, 4, 95, 13, 81, 18, 76, 23, 71, 37, 57, 42, 52, 47, 7, 87, 12, 82, 17, 77, 31, 63, 36, 58, 41, 53, 55, 39, 60, 34, 74, 20, 79, 15, 84, 10, 98)}
\begin{typecase}
Type 5, $(n=53)$.
\end{typecase}
\seq{(1, 48, 8, 41, 7, 42, 14, 35, 13, 36, 24, 25, 31, 18, 38, 11, 49, 23, 26, 30, 19, 37, 12, 44, 5, 51, 21, 28, 32, 17, 43, 6, 50, 10, 39, 9, 40, 16, 33, 15, 34, 22, 27, 29, 20, 52, 4, 45, 53, 3, 46, 2, 47)}
\begin{typecase}
Type 5, $(n=65)$.
\end{typecase}
\seq{(1, 60, 12, 49, 19, 42, 30, 31, 37, 24, 36, 25, 43, 18, 54, 7, 61, 11, 50, 10, 51, 17, 44, 16, 45, 23, 38, 22, 39, 29, 32, 40, 21, 47, 14, 46, 15, 53, 8, 52, 9, 63, 5, 56, 34, 27, 41, 20, 48, 13, 55, 6, 62, 28, 33, 35, 26, 64, 4, 57, 65, 3, 58, 2, 59)}
\begin{typecase}
Type 5, $(n=77)$.
\end{typecase}
\seq{(1, 72, 36, 37, 43, 30, 42, 31, 49, 24, 48, 25, 55, 18, 54, 19, 61, 12, 60, 13, 67, 6, 74, 34, 39, 41, 32, 52, 21, 59, 14, 66, 7, 73, 11, 62, 10, 63, 17, 56, 16, 57, 23, 50, 22, 51, 29, 44, 28, 45, 35, 38, 46, 27, 53, 20, 64, 9, 75, 5, 68, 40, 33, 47, 26, 58, 15, 65, 8, 76, 4, 69, 77, 3, 70, 2, 71)}
\begin{typecase}
Type 11, $(n=47)$.
\end{typecase}
\seq{(1, 42, 8, 35, 25, 18, 32, 11, 29, 14, 36, 7, 43, 5, 38, 12, 31, 44, 6, 37, 13, 30, 20, 23, 27, 16, 34, 9, 39, 4, 46, 2, 41, 45, 15, 28, 22, 21, 19, 24, 26, 17, 33, 10, 40, 3, 47)}
\begin{typecase}
Type 11, $(n=59)$.
\end{typecase}
\seq{(1, 54, 8, 47, 31, 24, 38, 17, 35, 20, 42, 13, 49, 6, 56, 37, 18, 44, 11, 41, 14, 48, 7, 55, 23, 32, 30, 25, 27, 28, 34, 21, 39, 16, 46, 9, 51, 4, 58, 2, 53, 57, 5, 50, 12, 43, 19, 36, 26, 29, 33, 22, 40, 15, 45, 10, 52, 3, 59)}
\begin{typecase}
Type 11, $(n=71)$.
\end{typecase}
\seq{(1, 66, 8, 59, 13, 54, 20, 47, 27, 40, 34, 33, 41, 26, 46, 21, 53, 14, 60, 7, 67, 29, 38, 36, 31, 43, 24, 48, 19, 45, 22, 52, 15, 49, 18, 56, 11, 63, 4, 70, 2, 65, 69, 5, 62, 12, 55, 9, 58, 16, 51, 23, 44, 30, 37, 35, 32, 42, 25, 39, 28, 68, 6, 61, 50, 17, 57, 10, 64, 3, 71)}
\begin{typecase}
Type 11, $(n=83)$.
\end{typecase}
\seq{(1, 78, 8, 71, 13, 66, 20, 59, 27, 52, 32, 47, 39, 40, 46, 33, 43, 36, 50, 29, 57, 22, 64, 15, 61, 18, 68, 11, 75, 4, 82, 2, 77, 81, 5, 74, 12, 67, 9, 70, 16, 63, 23, 56, 73, 6, 80, 34, 45, 41, 38, 48, 31, 53, 26, 60, 19, 65, 14, 72, 7, 79, 35, 44, 42, 37, 49, 30, 54, 25, 51, 28, 58, 21, 55, 24, 62, 17, 69, 10, 76, 3, 83)}
\begin{typecase}
Type 11, $(n=95)$.
\end{typecase}
\seq{(1, 90, 8, 83, 13, 78, 20, 71, 25, 66, 32, 59, 39, 52, 46, 45, 51, 40, 58, 33, 65, 26, 72, 19, 77, 14, 84, 7, 91, 41, 50, 48, 43, 53, 38, 60, 31, 57, 34, 64, 27, 61, 30, 68, 23, 75, 16, 82, 9, 79, 12, 86, 5, 93, 89, 2, 94, 4, 87, 11, 80, 18, 73, 15, 76, 22, 69, 29, 62, 36, 55, 92, 6, 85, 3, 88, 10, 81, 17, 74, 24, 67, 21, 70, 28, 63, 35, 56, 42, 49, 47, 44, 54, 37, 95)}
\begin{typecase}
Type 11, $(n=107)$.
\end{typecase}
\seq{(1, 102, 8, 95, 13, 90, 20, 83, 27, 76, 34, 69, 31, 72, 38, 65, 35, 68, 97, 6, 104, 46, 57, 43, 60, 50, 53, 55, 48, 62, 41, 67, 36, 74, 29, 81, 22, 88, 15, 85, 18, 92, 11, 99, 4, 106, 2, 101, 105, 5, 98, 12, 91, 9, 94, 16, 87, 23, 80, 30, 73, 37, 66, 42, 61, 49, 54, 56, 47, 103, 7, 96, 14, 89, 19, 84, 26, 77, 33, 70, 40, 63, 45, 58, 52, 51, 59, 44, 64, 39, 71, 32, 78, 25, 75, 28, 82, 21, 79, 24, 86, 17, 93, 10, 100, 3, 107)}
\begin{typecase}
Type 1, $(n=25)$.
\end{typecase}
\seq{(1, 25, 5, 21, 9, 15, 11, 19, 7, 17, 13, 2, 24, 6, 20, 4, 22, 8, 18, 12, 14, 16, 10, 3, 23)}
\begin{typecase}
Type 7, $(n=31)$.
\end{typecase}
\seq{(1, 28, 14, 15, 13, 16, 12, 17, 31, 27, 2, 30, 18, 11, 19, 10, 20, 9, 21, 8, 24, 5, 23, 6, 22, 7, 25, 4, 26, 3, 29)}
\begin{typecase}
Type 3, $(n=27)$.
\end{typecase}
\seq{(1, 24, 2, 23, 27, 12, 13, 26, 16, 9, 15, 10, 14, 11, 17, 8, 20, 5, 19, 6, 18, 7, 21, 4, 22, 3, 25)}
\begin{typecase}
Type 3, $(n=33)$.
\end{typecase}
\seq{(1, 30, 32, 2, 29, 3, 28, 6, 25, 5, 26, 4, 27, 7, 24, 10, 21, 9, 22, 8, 23, 11, 20, 12, 19, 13, 18, 33, 15, 16, 14, 17, 31)}
\begin{typecase}
Type 4, $(n=28)$.
\end{typecase}
\seq{(1, 25, 5, 21, 11, 15, 9, 17, 7, 19, 13, 26, 6, 20, 12, 27, 3, 23, 16, 10, 22, 4, 28, 2, 24, 8, 18, 14)}
\begin{typecase}
Type 10, $(n=34)$.
\end{typecase}
\seq{(1, 33, 3, 31, 11, 23, 13, 21, 15, 19, 17, 25, 9, 27, 7, 29, 5, 12, 22, 14, 28, 6, 30, 4, 32, 2, 34, 8, 26, 10, 24, 18, 16, 20)}
\begin{typecase}
Type 10, $(n=46)$.
\end{typecase}
\seq{(1, 45, 3, 43, 11, 35, 13, 33, 15, 31, 23, 25, 21, 27, 19, 29, 17, 37, 9, 39, 7, 41, 5, 18, 28, 20, 34, 12, 36, 10, 38, 8, 46, 2, 44, 4, 42, 6, 40, 14, 32, 16, 30, 24, 22, 26)}
\section{\texorpdfstring{Small exceptional $n$ not covered by the induction chains}{Small exceptional n not covered by the induction chains}}\label{app:exceptions}
For each $n$ below, we list the cyclic arrangement and the linear
arrangement.
\begin{typecase}
$(n=7)$.
\end{typecase}
Linear arrangement:
\seq{(4, 5, 7, 2, 1, 3, 6)}
\begin{typecase}
$(n=8)$.
\end{typecase}
Cyclic arrangement:
\seq{(1, 3, 5, 4, 6, 2, 7, 8)}
Linear arrangement:
\seq{(3, 4, 8, 6, 1, 2, 5, 7)}
\begin{typecase}
$(n=9)$.
\end{typecase}
Cyclic arrangement:
\seq{(1, 8, 7, 2, 6, 9, 3, 5, 4)}
Linear arrangement:
\seq{(7, 8, 1, 5, 4, 2, 3, 6, 9)}
\begin{typecase}
$(n=10)$.
\end{typecase}
Cyclic arrangement:
\seq{(1, 5, 3, 9, 6, 4, 8, 10, 2, 7)}
Linear arrangement:
\seq{(6, 9, 1, 3, 7, 5, 10, 2, 4, 8)}
\begin{typecase}
$(n=11)$.
\end{typecase}
Cyclic arrangement:
\seq{(1, 5, 3, 7, 9, 6, 10, 2, 8, 4, 11)}
Linear arrangement:
\seq{(7, 2, 4, 10, 11, 3, 6, 8, 1, 5, 9)}
\begin{typecase}
$(n=12)$.
\end{typecase}
Cyclic arrangement:
\seq{(1, 7, 5, 10, 2, 6, 4, 8, 12, 3, 9, 11)}
Linear arrangement:
\seq{(2, 3, 12, 6, 9, 1, 4, 11, 7, 8, 10, 5)}
\begin{typecase}
$(n=13)$.
\end{typecase}
Cyclic arrangement:
\seq{(1, 11, 7, 2, 12, 6, 8, 10, 4, 3, 9, 5, 13)}
Linear arrangement:
\seq{(10, 8, 4, 3, 9, 5, 13, 1, 6, 12, 2, 7, 11)}
\begin{typecase}
$(n=14)$.
\end{typecase}
Cyclic arrangement:
\seq{(1, 11, 4, 12, 8, 10, 5, 13, 3, 6, 14, 2, 7, 9)}
Linear arrangement:
\seq{(4, 1, 14, 10, 5, 7, 3, 12, 8, 2, 13, 11, 9, 6)}
\begin{typecase}
$(n=15)$.
\end{typecase}
Cyclic arrangement:
\seq{(1, 14, 2, 13, 11, 3, 12, 4, 10, 5, 9, 15, 6, 8, 7)}
Linear arrangement:
\seq{(9, 15, 6, 8, 7, 1, 14, 10, 5, 2, 13, 3, 11, 4, 12)}
\begin{typecase}
$(n=16)$.
\end{typecase}
Cyclic arrangement:
\seq{(1, 11, 13, 3, 9, 12, 2, 14, 7, 5, 16, 8, 4, 10, 6, 15)}
Linear arrangement:
\seq{(3, 9, 7, 5, 11, 1, 13, 15, 6, 2, 12, 16, 8, 4, 10, 14)}
\begin{typecase}
$(n=17)$.
\end{typecase}
Cyclic arrangement:
\seq{(1, 17, 9, 4, 16, 2, 11, 13, 7, 6, 14, 10, 3, 15, 5, 8, 12)}
Linear arrangement:
\seq{(1, 12, 14, 6, 7, 13, 11, 2, 4, 16, 10, 3, 17, 9, 15, 5, 8)}
\begin{typecase}
$(n=18)$.
\end{typecase}
Cyclic arrangement:
\seq{(1, 15, 9, 12, 4, 17, 7, 14, 2, 16, 5, 11, 10, 6, 18, 3, 13, 8)}
Linear arrangement:
\seq{(2, 16, 5, 11, 10, 6, 15, 17, 4, 12, 9, 7, 1, 8, 13, 3, 18, 14)}
\begin{typecase}
$(n=19)$.
\end{typecase}
Cyclic arrangement:
\seq{(1, 17, 5, 19, 3, 15, 7, 11, 13, 9, 2, 16, 8, 14, 4, 18, 6, 12, 10)}
Linear arrangement:
\seq{(12, 10, 1, 17, 5, 19, 3, 6, 18, 4, 14, 8, 16, 2, 9, 13, 11, 7, 15)}
\begin{typecase}
$(n=20)$.
\end{typecase}
Cyclic arrangement:
\seq{(1, 15, 9, 7, 14, 2, 19, 11, 5, 16, 8, 13, 17, 4, 12, 3, 18, 6, 10, 20)}
Linear arrangement:
\seq{(5, 16, 14, 18, 3, 13, 8, 7, 9, 12, 4, 17, 15, 1, 20, 10, 6, 2, 19, 11)}
\begin{typecase}
$(n=21)$.
\end{typecase}
Cyclic arrangement:
\seq{(1, 20, 2, 19, 14, 6, 15, 5, 16, 17, 3, 18, 4, 7, 13, 8, 12, 10, 11, 9, 21)}
Linear arrangement:
\seq{(10, 11, 19, 2, 18, 4, 6, 15, 5, 16, 17, 3, 8, 14, 7, 13, 9, 12, 21, 1, 20)}
\begin{typecase}
$(n=22)$.
\end{typecase}
Cyclic arrangement:
\seq{(11, 7, 15, 3, 17, 5, 13,9, 21, 1, 19, 14, 4, 16, 6, 12, 18, 2, 20, 10, 8, 22)}
Linear arrangement:
\seq{(17, 3, 15, 7, 11, 22, 8, 10, 20, 2, 18, 12, 6, 5, 13,9, 21, 1, 19, 14, 4, 16)}
\begin{typecase}
$(n=23)$.
\end{typecase}
Cyclic arrangement:
\seq{(1, 23, 15, 4, 22, 2, 17, 9, 10, 16, 3, 21, 5, 14, 12, 7, 19, 20, 6, 13, 11, 8, 18)}
Linear arrangement:
\seq{(18, 1, 12, 7, 19, 20, 6, 13, 11, 8, 16, 3, 23, 15, 4, 22, 2, 17, 9, 10, 14, 5, 21)}
\begin{typecase}
$(n=26)$.
\end{typecase}
Cyclic arrangement:
\seq{(1, 21, 6, 16, 11, 25, 2, 20, 13, 23, 4, 18, 9, 24, 12, 10, 17, 5, 22, 14, 8, 19, 3, 15, 7, 26)}
Linear arrangement:
\seq{(11, 25, 2, 20, 16, 6, 21, 1, 26, 10, 17, 5, 22, 14, 8, 19, 3, 24, 12, 15, 7, 4, 18, 9, 13, 23)}
\begin{typecase}
$(n=29)$.
\end{typecase}
Cyclic arrangement:
\seq{(1, 27, 6, 18, 15, 9, 24, 4, 29, 13, 20, 8, 25, 3, 21, 12, 16, 17, 7, 26, 2, 22, 11, 10, 14, 28, 5, 19, 23)}
Linear arrangement:
\seq{(16, 9, 27, 5, 20, 12, 13, 19, 6, 18, 7, 25, 11, 14, 2, 23, 1, 24, 8, 17, 15, 10, 26, 22, 3, 29, 21, 4, 28)}
\begin{typecase}
$(n=32)$.
\end{typecase}
Cyclic arrangement:
\seq{(1, 31, 5, 22, 4, 32, 7, 29, 10, 17, 19, 20, 16, 23, 3, 24, 12, 15, 11, 28, 8, 18, 9, 27, 21, 6, 30, 2, 25, 14, 13, 26)}
Linear arrangement:
\seq{(21, 7, 26, 2, 31, 11, 17, 16, 12, 30, 3, 25, 8, 20, 22, 6, 27, 1, 32, 10, 18, 15, 13, 29, 4, 24, 9, 5, 23, 19, 14, 28)}
\begin{typecase}
$(n=35)$.
\end{typecase}
Cyclic arrangement:
\seq{(1, 30, 8, 23, 15, 16, 22, 9, 19, 12, 26, 5, 31, 7, 24, 33, 29, 2, 34, 4, 27, 11, 20, 18, 13, 25, 6, 32, 10, 21, 17, 14, 28, 3, 35)}
Linear arrangement:
\seq{(24, 7, 31, 5, 26, 12, 19, 9, 22, 16, 15, 23, 8, 30, 1, 35, 3, 28, 14, 17, 21, 10, 32, 25, 6, 13, 18, 20, 11, 27, 4, 34, 2, 29, 33)}
\begin{typecase}
$(n=38)$.
\end{typecase}
Cyclic arrangement:
\seq{(1, 33, 6, 28, 20, 14, 34, 5, 29, 19, 15, 36, 3, 31, 8, 26, 13, 21, 18, 16, 35, 4, 30, 9, 25, 23, 11, 37, 2, 32, 7, 17, 22, 12, 27, 24, 10, 38)}
Linear arrangement:
\seq{(30, 4, 35, 13, 21, 18, 16, 23, 11, 37, 2, 32, 7, 27, 12, 36, 3, 31, 17, 22, 26, 8, 9, 25, 14, 20, 28, 6, 33, 1, 38, 10, 24, 15, 19, 29, 5, 34)}
\begin{typecase}
$(n=41)$.
\end{typecase}
Cyclic arrangement:
\seq{(1, 35, 2, 34, 3, 41, 33, 4, 40, 14, 23, 21, 16, 20, 17, 27, 10, 38, 6, 31, 13, 24, 12, 25, 19, 18, 30, 7, 37, 11, 26, 28, 9, 39, 5, 32, 22, 15, 29, 8, 36)}
Linear arrangement:
\seq{(30, 7, 37, 11, 26, 18, 19, 25, 12, 24, 13, 31, 6, 38, 10, 27, 17, 20, 16, 21, 23, 14, 34, 3, 41, 33, 4, 40, 32, 5, 39, 9, 28, 8, 29, 15, 22, 2, 35, 1, 36)}
\begin{typecase}
$(n=44)$.
\end{typecase}
Cyclic arrangement:
\seq{(1, 39, 6, 34, 20, 40, 5, 35, 10, 30, 15, 25, 29, 11, 43, 2, 38, 7, 33, 21, 19, 26, 14, 31, 9, 36, 4, 41, 13, 27, 18, 22, 23, 17, 37, 8, 32, 28, 12, 42, 3, 24, 16, 44)}
Linear arrangement:
\seq{(36, 4, 41, 13, 27, 18, 22, 32, 8, 37, 3, 42, 12, 28, 17, 23, 31, 9, 11, 29, 16, 24, 30, 15, 25, 20, 34, 6, 39, 1, 44, 10, 35, 5, 40, 14, 26, 19, 21, 33, 7, 38, 2, 43)}
\begin{typecase}
$(n=50)$.
\end{typecase}
Cyclic arrangement:
\seq{(1, 45, 6, 40, 11, 35, 16, 30, 21, 25, 44, 2, 49, 20, 26, 34, 17, 29, 22, 24, 27, 19, 41, 5, 46, 14, 32, 28, 18, 33, 13, 47, 4, 42, 9, 37, 23, 7, 39, 12, 48, 3, 43, 8, 38, 31, 15, 36, 10, 50)}
Linear arrangement:
\seq{(46, 14, 32, 19, 27, 33, 13, 47, 4, 42, 9, 37, 23, 28, 18, 5, 41, 10, 50, 1, 45, 6, 40, 20, 26, 25, 21, 30, 16, 35, 11, 49, 2, 44, 7, 39, 12, 34, 17, 29, 31, 15, 36, 24, 22, 38, 8, 43, 3, 48)}

\end{document}